\theoremstyle{plain}
\newtheorem{theorem}{Theorem}[section]
\newtheorem{proposition}[theorem]{Proposition}
\newtheorem{problem}[theorem]{Problem}
\newtheorem{corollary}[theorem]{Corollary}
\title{Genus embeddings of complete graphs minus a matching}
\author{Timothy Sun\\Department of Computer Science\\San Francisco State University}
\date{}
\newcommand{\Z}{\mathbb{Z}}
\begin{document}

\maketitle

\begin{abstract}
We show that for all $n \equiv 0 \pmod{6}$, $n \geq 18$, there is an orientable triangular embedding of the octahedral graph on $n$ vertices that can be augmented with handles to produce a genus embedding of the complete graph of the same order. For these values of $n$, the intermediate embeddings of the construction also determine some surface crossing numbers of the complete graph on $n$ vertices and the genus of all graphs on $n$ vertices and minimum degree $n-2$. 
\end{abstract}

\section{Introduction}

Let $K_n$ denote the complete graph on $n$ vertices, and for even $n$, let $O_n = K_n-(n/2)K_2$ denote the octahedral graph on $n$ vertices. The orientable genus of the complete graphs \cite{Ringel-MapColor} and the octahedral graphs \cite{JungermanRingel-Octa, Sun-Index2} have the formulas
$$\gamma(K_n) = \left\lceil \frac{(n-3)(n-4)}{12} \right\rceil$$
and
$$\gamma(O_n) = \left\lceil \frac{(n-2)(n-6)}{12} \right\rceil,$$
which match a lower bound derived from Euler's polyhedral equation. For certain residues $n$ modulo $12$, all of the faces in these genus embeddings are triangular. It is perhaps not too surprising that such embeddings exist for sufficiently large $n$ given that these graphs have $\Theta(n^3)$ 3-cycles, while any triangular embedding would need only $\Theta(n^2)$ triangular faces. Mohar and Thomassen \cite{MoharThomassen} ask whether sufficiently dense graphs are guaranteed to have triangular embeddings:

\begin{problem}[Mohar and Thomassen \cite{MoharThomassen}]

Does there exist a constant $c \in (0, 1)$ such that every graph $G = (V,E)$ of minimum degree at least $c|V|$ satisfying $|E| \equiv 3|V| \pmod{6}$ triangulates an orientable surface?
\label{prob-mohar}
\end{problem}

While the author believes that their question has a positive answer, it is not close to being resolved in the affirmative. The present work makes progress on the modest case where the minimum degree is $n-2$.

Jungerman and Ringel \cite{JungermanRingel-Octa} tackled the $(n-2)$-regular case by constructing triangular embeddings of the octahedral graphs for $n \equiv 0, 2, 6, 8 \pmod{12}$ using ``orientable cascades,'' i.e., nonorientable current graphs with orientable derived embeddings. The remaining even residues $n \equiv 4, 10 \pmod{12}$ were later solved by Sun \cite{Sun-Index2}. The present work strengthens part of Jungerman and Ringel's result by showing that there exist triangular embeddings of $O_n$ for $n \equiv 0,6 \pmod{12}$ where a set of handles can be added to the embedding to obtain genus embeddings of the complete graphs $K_n$. 

In the original proof of the genus formula for the complete graphs \cite{Ringel-MapColor}, the residues $n \equiv 0,6 \pmod{12}$ had difficult constructions. In fact, the genus of $K_{18}$ and $K_{30}$ were among the last few special cases to be solved, and the $n \equiv 0 \pmod{12}$ case relied on current graphs over nonabelian groups. Simpler constructions for both of these residues are now known \cite{Sun-K12s, Sun-Minimum, Sun-nPrism, Sun-Kainen}. We provide yet another approach whose main advantage is that it also constructs genus embeddings of other graphs. 

The first known genus embeddings of $K_{18}$ were found essentially through trial and error \cite{Mayer-Orientables, Jungerman-K18}, but as shown in Sun \cite{Sun-Index2}, an easier construction results from augmenting the triangular embedding of $O_{18}$ of Jungerman and Ringel \cite{JungermanRingel-Octa} with two handles. We generalize this idea to all $n \equiv 0,6 \pmod{12}$, $n \geq 18$. Let $K(n,t)$ denote the complete graph with a matching of size $t \leq n/2$ deleted. For even $n$, the ends of this spectrum are the complete graphs $K_n = K(n,0)$ and the octahedral graphs $O_n = K(n,n/2)$. For these values of $n$, our constructions determine the genus of all such graphs $K(n, t)$, i.e., the graphs of minimum degree $2$:

\begin{theorem}
When $n \equiv 0 \pmod{6}$, the genus of $K(n,t)$, for $t = 0, \dotsc, n/2$, is
$$\gamma(K(n,t)) = \left\lceil \frac{(n-3)(n-4)-2t}{12}\right\rceil.$$
\label{thm-mindeg}
\end{theorem}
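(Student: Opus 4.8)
My plan is to prove the lower bound from Euler's formula and the matching upper bound by threading a chain of handle additions from the triangular embedding of $O_n$ up to a genus embedding of $K_n$, reading off the intermediate graphs $K(n,t)$ along the way.

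For the lower bound I would argue as usual: take a minimum-genus embedding of $G = K(n,t)$, which is cellular, let $F$ be its number of faces, and combine Euler's formula $n - (\binom{n}{2} - t) + F = 2 - 2\gamma(G)$ with $3F \le 2(\binom{n}{2} - t)$ to get $\gamma(G) \ge (\binom{n}{2} - t - 3n + 6)/6 = ((n-3)(n-4) - 2t)/12$; since $\gamma(G)$ is an integer, $\gamma(K(n,t)) \ge \lceil ((n-3)(n-4) - 2t)/12 \rceil$. This step needs no hypothesis on $n$.

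For the upper bound, I would first observe that $K(n,t)$ is determined up to isomorphism by $n$ and $t$, so it suffices to fix one perfect matching $e_1, \dots, e_{n/2}$ of $K_n$ and build a single chain $\Phi_0 \subseteq \Phi_1 \subseteq \dots \subseteq \Phi_{n/2}$ in which $\Phi_s$ embeds $K_n - \{e_{s+1}, \dots, e_{n/2}\} \cong K(n, n/2-s)$ with the genus claimed in Theorem \ref{thm-mindeg}. The cases $n \in \{6, 12\}$ are trivial: each $K(n,t)$ is a spanning subgraph of $K_n$, hence embeds in its genus surface, and for these $n$ the Euler bound equals $\gamma(K_n)$ when $t < n/2$ and $\gamma(O_n)$ when $t = n/2$, both of which are known. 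So assume $n \ge 18$ and take $\Phi_0$ to be the triangular embedding of $O_n$ furnished by the main construction of the paper. To pass from $\Phi_{s-1}$ to $\Phi_s$ one inserts the matching edge $e_s$. Because $(n-3)(n-4) - n \equiv 0 \pmod{12}$ whenever $n \equiv 0 \pmod 6$, the indices $s$ at which the Euler bound for $K(n, n/2-s)$ jumps above that for $K(n, n/2-s+1)$ are exactly the $s$ with $s \equiv 1 \pmod 6$. At such an index $e_s$ would be inserted along a newly attached handle spanning a prescribed block of faces, arranged so that attaching the handle both routes $e_s$ and leaves enough slack to insert the rest of its block of (at most six) matching edges as chords of the faces thereby created; at every other index $e_s$ is drawn inside a face already present, so the genus is unchanged. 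Then exactly one handle is added at each $s \equiv 1 \pmod 6$, so $\Phi_s$ has genus $\gamma(O_n)$ plus the number of such indices not exceeding $s$; a short computation modulo $12$ matches this with $\lceil ((n-3)(n-4) - 2(n/2-s))/12 \rceil$ for every $s$. In particular $\Phi_{n/2}$ is a genus embedding of $K_n$, using $\gamma(O_n) + \lceil n/12 \rceil = \gamma(K_n)$ (and for $n \equiv 6 \pmod{12}$ the last block has only three edges and leaves three quadrilateral faces, consistent with $K_n$ having no triangular genus embedding). Together with the lower bound this proves Theorem \ref{thm-mindeg}.

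The genus accounting above is routine once the chain exists, so the hard part will be ensuring that the chain never stalls: that whenever $s \not\equiv 1 \pmod 6$ the two ends of $e_s$ really do lie on a common face of $\Phi_{s-1}$, and that whenever $s \equiv 1 \pmod 6$ there is a legitimate block of faces to attach the handle to, namely one whose incident vertices are paired by the matching exactly as the routing of $e_s$ and the subsequent chords demand. In other words, the triangular embedding of $O_n$ must be chosen so that it contains $\lceil n/12 \rceil$ mutually disjoint such \emph{reservoir} blocks, each fillable by precisely the intended batch of matching edges with all new edges in the correct rotational positions; supplying and verifying these blocks is what the current-graph construction in the earlier part of the paper has to accomplish, and it is the crux of the whole argument.
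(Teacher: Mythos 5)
Your overall architecture coincides with the paper's: the lower bound is the Euler bound (Corollary~\ref{corr-euler}), and the upper bound comes from a triangular embedding of $O_n$ augmented by $\lceil n/12\rceil$ handles, each absorbing a block of six matching edges, with the intermediate graphs $K(n,t)$ read off along the chain. Your arithmetic is right: the Euler bound for $K(n,n/2-s)$ equals $\gamma(O_n)+\lceil s/6\rceil$, so the genus must jump exactly at $s\equiv 1\pmod 6$, and $\gamma(O_n)+\lceil n/12\rceil=\gamma(K_n)$. Your interpolation---inserting the remaining five edges of a block one at a time as chords of the quadrilateral faces the handle creates---is a harmless reorganization of what the paper does via Proposition~\ref{prop-del}, which instead deletes up to five edges from the next triangular embedding up the chain; the two devices give the same genera. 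The base cases $n=6,12$ are also disposed of correctly.

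The genuine gap is the one you name yourself in your closing paragraph: you never establish that a triangular embedding of $O_n$ with $\lceil n/12\rceil$ disjoint, correctly populated ``reservoir blocks'' exists, and that is not a verification detail to be deferred---it is the entire content of Section~3 of the paper and the only part of the theorem that is hard. The paper supplies it by constructing, for each $n\equiv 0\pmod 6$ with $n\ge 18$, an orientable cascade over $\Z_n$ (Figures~\ref{fig-ex}, \ref{fig-case6}, \ref{fig-case0-s2}, \ref{fig-case0}) containing a degree-one vertex whose excess has order three; that vertex forces the derived embedding to contain the triangular faces $[0,2r,4r]$ and an odd translate of it (with $r=n/6$), and the additivity rule shifts this pair by $2i$ to produce the disjoint face-pairs that the handles join. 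One must then check that each handle simultaneously routes three missing edges and relocates three existing edges so that the vacated quadrilaterals have the other three missing edges as diagonals, that the six edges contributed by distinct handles are pairwise distinct (in the $n\equiv 0\pmod{12}$ case this hinges on the two batches of diagonals having endpoints of opposite parity), and that the final overlap of three duplicated edges in the $n\equiv 6\pmod{12}$ case is absorbed by Proposition~\ref{prop-del}. All of this depends on the specific currents chosen and cannot be assumed. Without exhibiting these current graphs, or some substitute construction of the required octahedral embeddings, the upper bound---and hence the theorem---remains unproven.
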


The \emph{surface crossing number} of a graph is the fewest number of crossings needed in a drawing of that graph in some specified closed surface. As a byproduct, the same construction provides surface crossing numbers of complete graphs over a range of surfaces:

\begin{theorem}
When $n \equiv 0 \pmod{6}$, the surface crossing number of $K_n$ in the surface of genus $\gamma(K_n)-g$, for $g = 1, \dotsc, \lceil n/12\rceil$, is $6g$ if $n \equiv 0 \pmod{12}$ and $6g-3$ if $n \equiv 6 \pmod{12}$.
\label{thm-crossing}
\end{theorem}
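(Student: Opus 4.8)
The statement is an equality between two quantities, so I would prove matching upper and lower bounds. The lower bound is a short Euler‑characteristic estimate; the upper bound is read directly off the intermediate stages of the handle‑augmentation construction of this paper (the triangular embedding of $O_n$ together with the handles that route the matching edges). Write $S_h$ for the orientable surface of genus $h$, and set $h = \gamma(K_n) - g$ throughout.

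\textbf{Lower bound.} I would invoke the standard estimate that a simple graph $G=(V,E)$ with $|V|\ge 3$ drawn in $S_h$ has at least $|E| - 3|V| + 6 - 6h$ crossings: greedily deleting one edge at each crossing removes all crossings while deleting at most $c$ edges, leaving a crossing‑free drawing — that is, an embedding in $S_h$ — of a simple graph on $|V|$ vertices with at least $|E|-c$ edges, and any such graph satisfies $|E|-c \le 3|V| - 6 + 6h$. Applied to $K_n$, letting $c$ be the number of crossings in an optimal drawing in $S_h$,
$$c \ \ge\ \binom{n}{2} - 3n + 6 - 6h \ =\ \frac{(n-3)(n-4)}{2} - 6\gamma(K_n) + 6g.$$
When $n \equiv 0 \pmod{12}$ the genus formula gives $\gamma(K_n) = (n-3)(n-4)/12$ with no rounding, so the right‑hand side is exactly $6g$; when $n \equiv 6 \pmod{12}$ one has $\gamma(K_n) = \bigl((n-3)(n-4)+6\bigr)/12$, so it is exactly $6g-3$. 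In particular no rounding is needed and the lower bound matches the target value on the nose.

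\textbf{Upper bound.} Here I would use the construction itself: a triangular embedding of $O_n$ in $S_{\gamma(O_n)}$ to which $\gamma(K_n)-\gamma(O_n) = \lceil n/12\rceil$ handles are attached, each handle absorbing a block of matching edges without crossings — six edges per handle when $n\equiv 0\pmod{12}$, and six per handle except for one distinguished handle carrying three when $n\equiv 6\pmod{12}$. Fix $g\in\{1,\dots,\lceil n/12\rceil\}$ and attach only $\lceil n/12\rceil - g$ of the handles, omitting the distinguished one first when $n\equiv 6\pmod{12}$. The result is an embedding in $S_{\gamma(K_n)-g}$ of $O_n$ together with all but $6g$ matching edges when $n\equiv 0\pmod{12}$ (respectively all but $6g-3$ when $n\equiv 6\pmod{12}$); reinserting each remaining matching edge across a single edge of the triangulation costs one crossing apiece, yielding a drawing of $K_n$ in $S_{\gamma(K_n)-g}$ with $6g$ crossings (respectively $6g-3$). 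Combined with the lower bound this gives equality, for every $g$ in the stated range.

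\textbf{Main obstacle.} The genus bookkeeping — the identity $\gamma(K_n)-\gamma(O_n)=\lceil n/12\rceil$ and the presence of exactly one ``short'' handle carrying three edges when $n\equiv 6\pmod{12}$ — is elementary, and the lower bound is essentially automatic. The one point that is not purely formal is the claim that a matching edge which is \emph{not} routed over a handle can always be drawn with exactly one crossing: this needs the two endpoints of such an edge to sit as the apex vertices on the two sides of a common edge of the current triangulation. I expect this to fall out of the structure of the construction — each handle being attached across a pair of faces whose boundary vertices are precisely the endpoints of the edges it routes, so that deleting a handle returns those edges to exactly such one‑crossing positions — and verifying this against the explicit handle attachments is where the real care lies.
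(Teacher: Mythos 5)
Your overall architecture is exactly the paper's: Kainen's girth-3 lower bound (Proposition \ref{prop-kainen}/Corollary \ref{corr-kainen}) on one side, and on the other side the intermediate embeddings of $K(n,t)$ from the handle construction with each still-missing matching edge inserted at the cost of one crossing. Your arithmetic for the lower bound and the handle/edge counting are both correct. The one substantive issue is the step you yourself flag as the ``main obstacle'': your conjectured mechanism for it is not the right one. You guess that the one-crossing positions coincide with the handle locations (``each handle being attached across a pair of faces whose boundary vertices are precisely the endpoints of the edges it routes, so that deleting a handle returns those edges to exactly such one-crossing positions''). In the paper the two families of face-pairs are \emph{disjoint}: the handles join the faces $[2i,4r+2i,2r+2i]$ and $[r+2i,3r+2i,5r+2i]$ induced by the degree-1 vertex, while the missing edge $e_i=(q+2i,q+3r+2i)$ is drawn across the edge $(2i,r+2i)$, whose two incident triangles have apexes $q+2i$ and $q+3r+2i$ (Figure \ref{fig-add6}(b)). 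It is precisely because these crossing sites are disjoint from the faces consumed by the handles (and from each other) that the one-crossing insertion of $e_i$ remains available in every subsequent embedding in which $e_i$ is still absent; this has to be read off the explicit current graphs, not deduced formally. Two smaller points: the paper treats $n=6$ and $n=12$ separately (the planar crossing number of $K_6$, and a direct insertion argument from the log of the $O_{12}$ cascade), since the main constructions require $s\geq 1$ and $s\geq 2$ respectively; and in the $n\equiv 6\pmod{12}$ case the ``distinguished'' handle is not omitted so much as it overlaps the $i=0$ handle in three duplicated edges, which are then deleted via Proposition \ref{prop-del} --- though your bookkeeping of $6g-3$ remaining edges comes out the same.
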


The proofs of these results are each divided into two cases: Theorem \ref{thm-mindeg} is proven as Corollaries \ref{corr-g6} and \ref{corr-g0}, and Theorem \ref{thm-crossing} is proven as Corollaries \ref{corr-c6} and \ref{corr-c0}.

\section{Background}

For more information on topological graph theory, see Gross and Tucker \cite{GrossTucker} or Mohar and Thomassen \cite{MoharThomassen}. 

Let $S_g$ denote the orientable surface of genus $g$. In this work, embeddings $\phi\colon G \to S_g$ of a graph $G = (V, E)$ in $S_g$ are usually \emph{cellular}, i.e., that $S_g \setminus \phi(G)$ is a disjoint union of open disks. The disks are called \emph{faces}, and we denote the set of faces as $F$.  Cellular embeddings satisfy the \emph{Euler polyhedral equation}
$$|V|-|E|+|F| = 2-2g.$$
Each face is bounded by a closed walk of length $k$. We call faces of length 3 and 4 \emph{triangular} and \emph{quadrangular}, respectively, and an embedding is triangular if all of its faces are triangular. The Euler polyhedral equation leads to well-known relationships between the numbers of edges and vertices of embedded graphs:
\begin{proposition}
If an embedding of a (not necessarily simple) graph $G = (V, E)$ in the surface $S_g$ is triangular, then
$$|E| = 3|V|-6+6g.$$
If a simple graph $G = (V,E)$ on at least 3 vertices is embedded (not necessarily cellularly) in the surface $S_g$, then
$$|E| \leq 3|V|-6+6g.$$
\label{prop-bound}
\end{proposition}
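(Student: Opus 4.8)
The plan is to obtain both statements from Euler's polyhedral equation combined with a count of edge--face incidences. Each edge of a cellular embedding is traversed exactly twice in total by the boundary walks of the faces (an edge meeting the same face on both sides contributes $2$ to the length of that face), so $\sum_{f \in F} \ell(f) = 2|E|$, where $\ell(f)$ denotes the length of the boundary walk of $f$. If every face is triangular then the left side equals $3|F|$, giving $|F| = \tfrac{2}{3}|E|$; substituting this into $|V| - |E| + |F| = 2 - 2g$ and solving for $|E|$ yields $|E| = 3|V| - 6 + 6g$.

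For the inequality I would first reduce to the case where $G$ is connected and the embedding is cellular. If $G$ is disconnected, some face of the embedding is incident to two distinct components, and an edge joining them can be drawn in that face; this keeps $G$ simple, so we may repeat until $G$ is connected. If a connected embedding is not cellular, then some non-disk face contains a noncontractible simple closed curve disjoint from $G$; cutting along it and capping the one or two resulting boundary circles with disks produces an embedding of $G$ in a surface of genus $g' \le g$, for which the asserted bound is only stronger. So assume $G$ is connected and simple, $|V| \ge 3$, and the embedding is cellular. The key point is that every face boundary walk then has length at least $3$: a walk of length $1$ would traverse a loop, and a walk of length $2$ would traverse either two parallel edges or one edge twice, the latter forcing both endpoints of that edge to have degree $1$ and hence $G \cong K_2$, contradicting $|V| \ge 3$. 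Therefore $2|E| = \sum_f \ell(f) \ge 3|F|$, so $|F| \le \tfrac{2}{3}|E|$, and then $2 - 2g = |V| - |E| + |F| \le |V| - \tfrac{1}{3}|E|$, which rearranges to $|E| \le 3|V| - 6 + 6g$.

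The double counting and the length-at-least-$3$ argument are immediate once set up; I expect the only mildly delicate points to be the reductions in the inequality -- checking that a disconnected embedded graph always has a face incident to more than one component, and that capping off an essential curve in a non-disk face does not raise the genus -- together with the bookkeeping for pendant edges and bridges, where a face traverses an edge twice but the total boundary length still exceeds $2$ because $|V| \ge 3$.
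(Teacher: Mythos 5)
Your proof is correct. Note, however, that the paper does not prove this proposition at all: it is stated as a ``well-known relationship'' following from the Euler polyhedral equation, so there is no argument to compare against. What you give is the standard derivation: the edge--face incidence count $\sum_f \ell(f) = 2|E|$ combined with $\ell(f) = 3$ (resp.\ $\ell(f) \geq 3$) and Euler's formula, preceded by the usual reduction to a connected, cellularly embedded graph. The reductions are handled in the right order -- connecting $G$ first matters, because the claim that a non-disk face contains a curve that is essential \emph{in the ambient surface} (not merely noncontractible within the face) genuinely uses connectivity of $G$: a curve noncontractible in the face could still bound a disk in $S_g$, but such a disk would have to contain all of the connected graph $G$, and then the face contains a once-punctured copy of $S_g$ and hence an essential curve when $g \geq 1$ (while for $g = 0$ every face of a connected embedded graph is already a disk). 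You flagged this as the delicate point without spelling it out, which is fair; your length-$\geq 3$ argument for face boundary walks, including the pendant-edge case, is exactly right.
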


The \emph{genus} $\gamma(G)$ of a graph $G$ is the smallest integer $g$ such that $G$ can be embedded in $S_g$, and such an embedding is referred to as a \emph{genus embedding}. By rearranging the second part of Proposition \ref{prop-bound}, we obtain a result sometimes referred to as the \emph{Euler lower bound}:

\begin{corollary}
For a simple graph $G = (V,E)$ on at least 3 vertices, its genus is at least
$$\gamma(G) \geq \left\lceil \frac{|E|-3|V|+6}{6} \right\rceil.$$
\label{corr-euler}
\end{corollary}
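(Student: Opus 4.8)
The plan is to derive this directly from the second inequality in Proposition~\ref{prop-bound}. First I would fix a genus embedding of $G$, that is, an embedding $\phi\colon G \to S_{\gamma(G)}$; such an embedding exists by the definition of $\gamma(G)$, and it need not be cellular, but that is harmless since the relevant part of Proposition~\ref{prop-bound} does not assume cellularity. Since $G$ is simple on at least $3$ vertices, Proposition~\ref{prop-bound} applies with $g = \gamma(G)$ and yields $|E| \leq 3|V| - 6 + 6\gamma(G)$.

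Next I would simply rearrange this inequality to isolate the genus: subtracting $3|V| - 6$ from both sides and dividing by $6$ gives $\gamma(G) \geq \bigl(|E| - 3|V| + 6\bigr)/6$. Finally, since $\gamma(G)$ is an integer, any integer that is at least a real number $x$ is at least $\lceil x \rceil$, so $\gamma(G) \geq \left\lceil \bigl(|E| - 3|V| + 6\bigr)/6 \right\rceil$, which is the claimed bound.

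There is essentially no obstacle here; the only point worth stating explicitly is that the inequality form of Proposition~\ref{prop-bound} already covers non-cellular embeddings, so no separate argument is needed to handle the possibility that a genus embedding fails to be cellular. The ceiling step is the sole use of integrality and deserves a one-line mention in the write-up.
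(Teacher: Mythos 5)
Your proof is correct and matches the paper's approach exactly: the paper obtains this corollary simply by rearranging the second (non-cellular) inequality of Proposition~\ref{prop-bound} and taking the ceiling, which is precisely what you do. Your explicit remarks about non-cellularity and the integrality step are fine and add nothing beyond what the paper implicitly relies on.
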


We observe that the claim in Theorem \ref{thm-mindeg} and the genus formulas of the complete and octahedral graphs are equivalent to the fact that the the genus of these graphs match their Euler lower bound. In practice, these genus formulas are rarely referenced when constructing genus embeddings. Instead, it is more convenient to examine the distribution of faces in the embedding. If a simple graph has a triangular embedding, then that would automatically be a genus embedding. For graphs that cannot triangulate a surface due to the number of edges, one can check if only a few extra ``diagonals'' are needed to triangulate its nontriangular faces. 

\begin{proposition}
Let $G = (V,E)$ be a graph with a triangular embedding in some orientable surface. If $G'$ is any simple graph formed by deleting up to five edges from $G$, then the genus of $G'$ matches the Euler lower bound. 
\label{prop-del}
\end{proposition}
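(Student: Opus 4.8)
The plan is to combine the exact edge count for triangular embeddings (the first part of Proposition \ref{prop-bound}) with the Euler lower bound (Corollary \ref{corr-euler}), using the elementary fact that deleting edges changes neither the vertex set of a graph nor the surface into which it embeds. Suppose $G'$ is obtained from $G$ by deleting $k$ edges, with $0 \le k \le 5$. Since edge deletions leave the vertex set intact, $|V(G')| = |V|$ and $|E(G')| = |E| - k$, and the triangularity of the given embedding of $G$ in $S_g$ yields $|E| = 3|V| - 6 + 6g$. Hence $|E(G')| - 3|V(G')| + 6 = 6g - k$.

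First I would invoke Corollary \ref{corr-euler} for the simple graph $G'$ (which has at least three vertices, as may be assumed) to obtain
$$\gamma(G') \ge \left\lceil \frac{6g-k}{6} \right\rceil = g.$$
The equality here is the only place the hypothesis is used: for $0 \le k \le 5$ the integer $6g - k$ lies in $\{6g-5, \dots, 6g\}$, whose values all have ceiling $g$ after division by $6$; for $k = 6$ the bound would instead read $g-1$, which is precisely why the statement is phrased with ``up to five.''

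For the reverse inequality, I would observe that $G'$ is a subgraph of $G$, so erasing the deleted edges from the triangular embedding of $G$ in $S_g$ produces an embedding of $G'$ in $S_g$, giving $\gamma(G') \le g$. Combining the two inequalities yields $\gamma(G') = g$, which by the displayed computation is exactly the Euler lower bound $\lceil (|E(G')| - 3|V(G')| + 6)/6 \rceil$ for $G'$. There is no genuine obstacle in this argument — it is a one-paragraph deduction — and the only subtlety worth flagging is that ``up to five'' deleted edges is exactly the range for which the Euler lower bound of $G'$ stays equal to $g$.
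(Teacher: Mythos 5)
Your proposal is correct and follows essentially the same route as the paper: use the triangularity of the embedding of $G$ to get $|E| = 3|V|-6+6g$, observe that deleting at most five edges does not change the ceiling $\lceil(|E'|-3|V|+6)/6\rceil = g$, and note that the induced embedding of $G'$ in the same surface realizes this bound. The only cosmetic difference is that you split the conclusion into two inequalities while the paper compresses it into a single displayed identity; the one small point you wave at (``as may be assumed'') is justified in the paper by noting that a graph with a triangular embedding automatically has $|V|\geq 3$.
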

\begin{proof}
Let $G' = (V, E')$ be formed by deleting $m$ edges from $G$, for some nonnegative $m \leq 5$. Since $G$ has a triangular embedding, $|V| \geq 3$ and $(|E|-3|V|+6)/6$ is an integer. The embedding induced on $G'$ has genus
$$\frac{|E|-3|V|+6}{6} = \left\lceil\frac{|E|-3|V|+6}{6}-\frac{m}{6}\right\rceil = \left\lceil\frac{|E'|-3|V|+6}{6}\right\rceil.$$
This is exactly the Euler lower bound for $G'$. 
\end{proof}

Proposition \ref{prop-del} shows that Problem \ref{prob-mohar} has an equivalent formulation that applies to all graphs, not just those that can triangulate an orientable surface:

\begin{problem}
Does there exist a constant $c \in (0, 1)$ such that the genus of every graph $G = (V,E)$ of minimum degree at least $c|V|$ matches the Euler lower bound?
\label{prob-euler}
\end{problem}

The \emph{surface crossing number} $cr_g(G)$, introduced by Kainen \cite{Kainen-LowerBound}, is the fewest number of crossings needed to draw $G$ in $S_g$. Kainen proved a lower bound on the surface crossing number based on the girth of the graph, which we state here for girth $3$: 

\begin{proposition}[Kainen \cite{Kainen-LowerBound}]
Let $G = (V,E)$ be a simple graph on at least 3 vertices. The surface crossing number of $G$ in the surface of genus $g$ is at least 
$$cr_g(G) \geq |E|-(3|V|-6+6g).$$
\label{prop-kainen}
\end{proposition}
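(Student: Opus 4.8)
The plan is to derive the bound from the Euler inequality of Proposition~\ref{prop-bound} via the standard trick of turning crossings into vertices. I would start with an optimal drawing of $G$ in $S_g$, so that it has exactly $c := cr_g(G)$ crossings. First I would normalize this drawing to a \emph{good} one: no edge crosses itself, two edges that share an endpoint do not cross, any two edges cross at most once, and no point of the surface lies on three or more edges. Each of these conditions can be arranged by a local modification — deleting a self-overlap, swapping the initial arcs of two adjacent edges, swapping the arcs of two edges between their two common crossing points, or a small general-position perturbation — that does not increase the number of crossings, so an optimal drawing may be assumed to have this form.

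Next I would form a new graph $H$ by placing a vertex at each of the $c$ crossing points; in the induced drawing of $H$ in $S_g$ there are then no crossings at all. An edge of $G$ that passes through $k$ crossing points is subdivided into $k+1$ arcs, and each crossing point lies on exactly two edges, so $|V(H)| = |V| + c$ and $|E(H)| = |E| + 2c$. The goodness conditions guarantee that $H$ is simple: a loop at a crossing vertex would force an edge to cross itself; two parallel arcs between two crossing vertices would force two edges to cross twice; two parallel arcs between a crossing vertex and an original vertex would force two edges to share that endpoint and also cross there; and two parallel arcs between original vertices would force parallel edges in $G$. Since $|V(H)| = |V| + c \geq |V| \geq 3$, Proposition~\ref{prop-bound} applies to $H$ embedded (not necessarily cellularly) in $S_g$, giving $|E(H)| \leq 3|V(H)| - 6 + 6g$. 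Substituting the counts above yields $|E| + 2c \leq 3(|V| + c) - 6 + 6g$, and rearranging isolates $c \geq |E| - (3|V| - 6 + 6g)$, which is exactly the claimed lower bound on $cr_g(G)$.

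I do not anticipate a real obstacle here; the only step that needs any care is the reduction to a good drawing, since that is precisely what makes $H$ simple and hence eligible for the simple-graph case of Proposition~\ref{prop-bound}. If one preferred to avoid that reduction, an alternative is to apply an Euler-type inequality valid for embedded multigraphs all of whose face boundaries have length at least $3$; the concluding arithmetic is identical.
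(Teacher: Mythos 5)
Your proof is correct, but it takes a genuinely different route from the paper's. The paper's argument is a one-liner in the other direction: delete one edge involved in each crossing (at most $cr_g(G)$ edges in total), obtaining a crossing-free drawing, i.e., an embedding of a simple subgraph $G'$ with $|E'| \geq |E| - cr_g(G)$; applying the second part of Proposition~\ref{prop-bound} to $G'$ gives $|E| - cr_g(G) \leq 3|V| - 6 + 6g$ immediately, with no need to normalize the drawing at all. You instead planarize by introducing a degree-$4$ vertex at each crossing, which forces you to first pass to a good drawing so that the resulting graph $H$ is simple and Proposition~\ref{prop-bound} applies to it; your accounting $|V(H)| = |V| + c$, $|E(H)| = |E| + 2c$ and the final arithmetic are all correct, and your case analysis for why $H$ has no loops or parallel edges is the right one. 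The trade-off is that the paper's deletion argument is shorter and robust to arbitrary drawings, while your planarization argument is the more standard template for crossing-number lower bounds and generalizes more readily (e.g., to girth-$k$ versions of Kainen's bound, where one wants to keep all edges and control face lengths rather than discard edges). Either proof is acceptable here; only note that after your normalization the good drawing has \emph{at most} $c$ crossings, which still suffices since the bound you derive applies to that drawing and hence to $c$.
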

\begin{proof}
Deleting one edge from each crossing yields an embedding in the same surface, so the resulting graph is subject to Proposition \ref{prop-bound}. 
\end{proof}

Like the phenomenon suggested by Problem \ref{prob-euler}, Kainen's bound is often tight when the graph is dense and the genus of the surface is near the genus of the graph. In Sun \cite{Sun-Kainen}, it was shown that in the surface of genus $\gamma(K_n)-1$, for $n \not\equiv 0,3,4,7 \pmod{12}$ (i.e., the complete graphs that do not triangulate an orientable surface), the surface crossing number of the complete graph $K_n$ matches Kainen's lower bound, except when $n = 9$. In this work, we use octahedral graphs to calculate surface crossing numbers of complete graphs for a wider range of surfaces. To this end, we derive a more convenient form for complete graphs:

\begin{corollary}
If there is a triangular embedding of $K(n, t)$ in the surface $S_g$, then the surface crossing number of $K_n$ in $S_g$ is at least $t$. 
\label{corr-kainen}
\end{corollary}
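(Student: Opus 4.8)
The plan is to combine the triangular embedding of $K(n,t)$ with Kainen's lower bound (Proposition \ref{prop-kainen}) applied to the complete graph $K_n$. First I would observe that $K_n$ is obtained from $K(n,t)$ by adding back the $t$ edges of the deleted matching, so $|E(K_n)| = |E(K(n,t))| + t$. Since $K(n,t)$ triangulates $S_g$, Proposition \ref{prop-bound} gives $|E(K(n,t))| = 3n - 6 + 6g$, hence $|E(K_n)| = 3n - 6 + 6g + t$. Substituting into Proposition \ref{prop-kainen} with $G = K_n$ yields
$$cr_g(K_n) \geq |E(K_n)| - (3n - 6 + 6g) = t,$$
which is exactly the claimed bound.

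The only point requiring a word of care is that Proposition \ref{prop-kainen} is stated for simple graphs on at least three vertices, so I would note that $K_n$ is simple and that $n \geq 3$ whenever $K(n,t)$ can have a (cellular) triangular embedding at all—indeed a triangular embedding forces $|V| \geq 3$, as already used in the proof of Proposition \ref{prop-del}. No genuine obstacle arises here; the statement is essentially a bookkeeping consequence of the edge count in a triangulation, and the main content of the paper lies in actually exhibiting the triangular embeddings of $K(n,t)$ that make the hypothesis applicable.
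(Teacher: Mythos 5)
Your argument is correct and is exactly the paper's: the paper also applies Proposition \ref{prop-kainen} to $K_n$, using the fact that the triangulation forces $3n-6+6g = |E(K(n,t))| = \binom{n}{2}-t$, so the bound is $\binom{n}{2}-\bigl(\binom{n}{2}-t\bigr)=t$. Your added remark about $n\geq 3$ is a harmless bit of extra care.
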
 
\begin{proof}
$cr_g(K_n) \geq \binom{n}{2} - \left(\binom{n}{2}-t\right) = t.$ In words, each missing edge needs to cross at least one other edge because the embedding is already triangular. 
\end{proof}

\subsection{Rotation systems}

Cellular embeddings in surfaces can be specified combinatorially in the following way. First, given an arbitrary orientation to the edges of the graph $G$, each edge $e \in E(G)$ induces two arcs $e^+$ and $e^-$ with the same endpoints but pointing in opposite directions. We use $E(G)^+$ to denote the set of such arcs. A \emph{rotation} at a vertex is a cyclic permutation of the arcs leaving the vertex. For a simple graph, it suffices to specify a cyclic permutation of the neighbors of that vertex. A \emph{(general) rotation system} assigns a rotation to each vertex and an \emph{edge signature} $\lambda\colon E(G) \to \{-1,1\}$. If an edge has signature $-1$, we say that it is \emph{twisted}. If the rotation system has no twisted edges, then it is \emph{pure} and the embedding is orientable. 

When tracing the faces of a rotation system, each face boundary walk begins in \emph{normal behavior}, where rotations are interpreted as, say, clockwise orderings of the edges leaving a vertex. If the walk traverses a twisted edge, the walk would then be in \emph{alternate behavior}, where the local orientation is reversed and rotations are now counterclockwise orderings. The walk reverts to normal behavior when it traverses another twisted edge. Given an orientation of each face boundary, an edge is said to be \emph{bidirectional} if the two times a face boundary walk traverses that edge are in opposite directions, and \emph{unidirectional}, otherwise. For a graph embedded with one face (like most of our current graphs), this property on edges is independent of the orientation of the face boundary walk. 

A \emph{vertex flip} reverses the rotation at that vertex and switches the signatures of each incident edge (unless it is a self-loop). Vertex flips do not affect the set of faces, and hence two rotation systems are considered to be equivalent if there is a sequence of vertex flips that transforms one rotation system into the other. A rotation system describes an orientable embedding if and only if it can be transformed into a pure rotation system by a sequence of vertex flips. Conversely, if there is a closed walk with an odd number of twisted edges, then no such sequence exists and the embedding is nonorientable. 

\subsection{Current graphs}

A \emph{current graph} $(\phi, \alpha)$ consists of a cellular embedding $\phi\colon G \to S$ in a possibly nonorientable surface $S$ and an arc-labeling $\alpha\colon E(G)^+ \to \Gamma$ satisfying $\alpha(e^+) = -\lambda(e)\alpha(e^-)$ for every edge $e$. The group $\Gamma$ is called the \emph{current group} and the arc labels are referred to as \emph{currents}. In this paper, the current group is always the (additive) cyclic group $\Z_n$, where $n$ is a multiple of $6$. The \emph{excess} of a vertex is the sum of all of the currents on the arcs entering the vertex, and if the excess is 0, we say that that vertex satisfies \emph{Kirchhoff's current law}. 

Face boundary walks $(e^\pm_1, e^\pm_2, e^\pm_3, \dotsc)$ are called \emph{circuits}, and the \emph{log} of a circuit replaces each $e^\pm_i$ with $\alpha(e^\pm_i)$ or $-\alpha(e^\pm_i)$ if the walk is in normal or alternate behavior, respectively. With one exception, the current graphs in this work are \emph{orientable cascades}, i.e., current graphs with a 1-face embedding in a nonorientable surface whose derived embeddings are orientable. Paraphrasing from Jungerman and Ringel \cite{JungermanRingel-Octa}, these orientable cascades satisfy a standard set of properties: 
\begin{enumerate}
\item[(C1)] Each vertex is of degree 1 or 3.
\item[(C2)] There is one face in the embedding, inducing a single circuit. 
\item[(C3)] Kirchhoff's current law is satisfied at every vertex of degree 3.
\item[(C4)] The excess of a degree 1 vertex has order 3 in $\Z_n$.
\item[(C5)] The log of the circuit contains each element of $\Z_n\setminus\{0, n/2\}$ exactly once.
\item[(C6)] The current on an edge is even if and only if the edge is bidirectional.
\end{enumerate}
For example, consider the orientable cascade in Figure \ref{fig-ex} with current group $\Z_{18}$ satisfying these properties. Solid and hollow vertices represent clockwise and counterclockwise rotations, respectively. It is identical to one described in Jungerman and Ringel \cite{JungermanRingel-Octa}, except with all of the vertices flipped. If its lone circuit is oriented so that it is in normal behavior as it traverses the degree 1 vertex, then the log is:
\begin{figure}[tbp]
\centering
\includegraphics[scale=1]{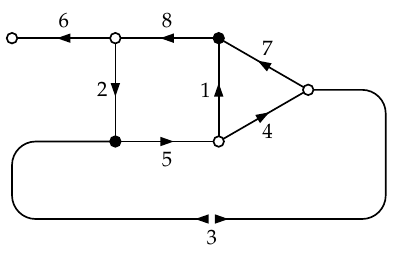}
\caption{An orientable cascade that generates a triangular embedding of $O_{18}$.}
\label{fig-ex}
\end{figure}
$$\begin{array}{ccccccccccccccccccccccccccccc}
(6 & 12 & 2 & 5 & 4 & 15 & 13 & 17 & 7 & 3 & 16 & 10 & 11 & 14 & 1 & 8).
\end{array}$$
Each current graph generates a \emph{derived embedding} of a \emph{derived graph}. The derived graph has vertex set $\Z_n$, and the rotation at each vertex $i \in \Z_n$, and hence the neighborhood of $i$, is determined by adding $i$ to each element of the log. This is sometimes referred to as the \emph{additivity rule}. Normally, for a current graph with twisted edges, one would also need to specify the signatures of the edges in the derived embedding. However, because of property (C6), the twisted edges are exactly those with one odd and one even endpoint (see Section 4.1.6 of Gross and Tucker \cite{GrossTucker} or Section 8.4 of Ringel \cite{Ringel-MapColor}). By flipping, say, the odd vertices, we obtain a pure rotation system, demonstrating that the derived embedding is orientable. While most of the current graphs in this paper are embedded in nonorientable surfaces, all derived embeddings will be orientable, so we often omit mentioning the orientability of such an embedding. 

Applying this process to the above log results in a triangular embedding of the octahedral graph $O_{18}$, since property (C5) implies that each vertex $i$ is adjacent to all other vertices except $i+9$. It was shown in Sun \cite{Sun-Index2} how to augment this embedding with handles to produce a genus embedding of $K_{18}$. We now describe this construction and its generalization to all larger $n \equiv 0 \pmod{6}$. 

\section{The constructions}

In Jungerman and Ringel \cite{JungermanRingel-Octa}, triangular embeddings of the octahedral graphs on $n \equiv 0 \pmod{6}$ vertices are generated by orientable cascades. We require new families satisfying additional properties so that we can attach handles that introduce the missing edges. The handles and edge flips differ between the $n \equiv 0 \pmod{12}$ and $n \equiv 6 \pmod{12}$ cases, so we describe them separately. 

\subsection{$n \equiv 6 \pmod{12}$}

\begin{theorem}
For $s \geq 1$, there is an orientable triangular embedding of the octahedral graph $O_{12s+6}$ that can be augmented with $s+1$ handles to obtain a genus embedding of the complete graph $K_{12s+6}$. 
\label{thm-case6}
\end{theorem}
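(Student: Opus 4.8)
The plan is to realize the target embedding of $K_{12s+6}$ by producing a carefully structured orientable cascade over $\Z_{12s+6}$ that generalizes the example of Figure~\ref{fig-ex}, and then attaching $s+1$ handles to its derived triangular embedding of $O_{12s+6}$ so as to install the $n/2 = 6s+3$ edges $\{i,\,i+n/2\}$ of the deleted matching. The arithmetic that fixes what to aim for is
$$\gamma(K_{12s+6}) - \gamma(O_{12s+6}) = (12s^2+5s+1) - (12s^2+4s) = s+1,$$
while $K_{12s+6}$ has exactly $n/2$ more edges than $O_{12s+6}$ and, by Proposition~\ref{prop-bound}, falls three edges short of triangulating $S_{\gamma(K_{12s+6})}$. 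So the goal is an embedding of $K_{12s+6}$ in $S_{\gamma(O_{12s+6})+(s+1)}$ with every face triangular except three quadrilaterals; by the Euler lower bound (Corollary~\ref{corr-euler}) any such embedding is automatically a genus embedding, which as a bonus re-derives $\gamma(K_{12s+6})$. It is natural to arrange the $s+1$ handle additions so that the first $s$ of them preserve triangularity, in which case the intermediate embeddings include triangular embeddings of $K(n,\,6k+3)$ for $k = 0,1,\dots,s$, which feed into Theorems~\ref{thm-mindeg} and~\ref{thm-crossing}.

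First I would construct the current graph. Following the cascade template of Jungerman and Ringel~\cite{JungermanRingel-Octa}, one takes a ladder-like current graph over $\Z_{12s+6}$ whose length grows with $s$, with a degree-$1$ vortex whose excess has order $3$, with currents chosen so that Kirchhoff's law holds at every degree-$3$ vertex, and with rotations chosen so that the single circuit has a log listing every element of $\Z_{12s+6}\setminus\{0,\,n/2\}$ exactly once; properties (C1)--(C6) then guarantee that the derived embedding is an orientable triangular embedding of $O_{12s+6}$. Beyond (C1)--(C6), the ladder must be designed so that the derived embedding contains the specific face configurations at which the handles will be attached; for $s=1$ these are the configurations used in the $K_{18}$ construction of Sun~\cite{Sun-Index2}, and the new families are built to repeat the pattern while lengthening with $s$.

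Given such an embedding, each handle raises the genus by one and allows six more edges in a triangulation, and the handles must collectively carry exactly the $6s+3$ matching edges with no net change to the non-matching edges. One therefore attaches the $s+1$ handles between appropriate faces --- first merging neighbouring triangular faces into larger faces where needed by deleting non-matching edges --- and re-triangulates each handle's annulus using matching edges together with re-routed copies of the deleted edges, with the first $s$ handles arranged to keep all faces triangular and the last arranged to leave the three quadrilateral faces. The crucial bookkeeping is that the faces used at the $s+1$ handle sites, together with their translates by $n/2$, cover $\Z_{12s+6}$, so that each of the $6s+3$ matching edges is installed exactly once, and that every interior edge deleted during merging is a non-matching edge, so that no multi-edges arise. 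The result is an embedding of $K_{12s+6}$ of genus $12s^2+4s+(s+1) = 12s^2+5s+1 = \gamma(K_{12s+6})$, hence a genus embedding.

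The main obstacle is the interaction between these two steps: one must exhibit an infinite family of current graphs that simultaneously satisfies all of (C1)--(C6) --- so that the derived embedding is a \emph{triangular} embedding of $O_{12s+6}$ --- \emph{and} places the handle sites where the labels are arranged to make the handle additions and the partition of $\Z_{12s+6}$ work out. Verifying (C2) and (C5) alone, i.e.\ that the face tracing of the ladder closes into one circuit whose log is a permutation of $\Z_n\setminus\{0,n/2\}$ for every $s$, is the familiar technical heart of Ringel-style constructions and is usually handled by an induction on the number of rungs or by writing the circuit out explicitly. What makes the handle placement delicate is the rigidity of the deleted matching: a handle joining two triangular faces can absorb only about three matching edges, so fitting all $6s+3$ of them onto just $s+1$ handles forces the embedding to be designed with enlarged faces and a globally coordinated choice of handle sites, and one must check carefully that the $s+1$ handle operations act on pairwise-disjoint faces and that no re-triangulation duplicates an edge.
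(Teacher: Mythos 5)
Your strategy is the paper's strategy: derive a triangular embedding of $O_{12s+6}$ from an orientable cascade with a degree-$1$ vertex of order-$3$ excess, then attach $s+1$ handles carrying the $6s+3$ matching edges, ending three edges of slack away from a triangulation of the genus surface. Your arithmetic ($\gamma(K_{12s+6})-\gamma(O_{12s+6}) = s+1$; $K_{12s+6}$ three edges short of triangulating $S_{\gamma(K_{12s+6})}$) is correct and matches the paper's bookkeeping. But the proposal stops exactly where the proof has to begin. The theorem is an existence statement whose entire content is an explicit infinite family of current graphs together with an explicit handle surgery; ``one takes a ladder-like current graph \dots built to repeat the pattern while lengthening with $s$'' and ``one attaches the $s+1$ handles between appropriate faces'' are the two claims that need proof, and the paper discharges them by exhibiting the cascades of Figures~\ref{fig-ex} and~\ref{fig-case6} (covering $s=1$, $s=2$, odd $s \geq 3$, and even $s \geq 4$ separately) and by writing down the surgery vertex by vertex. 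Your closing paragraph concedes that verifying (C2) and (C5) and coordinating the handle sites is the technical heart; a proof must actually do it.

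One structural point is also off. You assert that a handle joining two triangular faces can absorb only about three matching edges, so that one is ``forced'' to merge faces into larger ones before attaching handles. The paper does not merge faces. Each handle joins the two triangular faces $[0,4r,2r]$ and $[r,3r,5r]$ (with $r = 2s+1$) induced by the degree-$1$ vertex, translated by $2i$ for $i = 0,\dotsc,s$; the annulus is triangulated by six new edges, namely the three matching edges $(0,3r)$, $(2r,5r)$, $(4r,r)$ together with duplicates of the three existing edges $(0,r)$, $(2r,3r)$, $(4r,5r)$. Deleting the original copies of those three existing edges frees three quadrilaterals elsewhere whose missing diagonals are three further matching edges $(q+j,\,q+3r+j)$ --- six matching edges per handle, with triangularity preserved. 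This is the mechanism that lets $s+1$ handles suffice, and it depends on the cascade being designed so that those three quadrilaterals have exactly the right diagonals, which is precisely the design constraint your proposal leaves unaddressed. Finally, the last handle ($i=s$) is not ``arranged to leave three quadrilaterals''; it overlaps the $i=0$ handle in three edges, yielding a triangulation of $K_{12s+6}$ plus three duplicate edges, and Proposition~\ref{prop-del} then deletes the duplicates to land on the genus surface.
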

\begin{proof}
Consider the current graphs in Figures \ref{fig-ex} and \ref{fig-case6} with current group $\Z_{12s+6}$, for each $s \geq 1$. In our infinite families, the ellipses denote a ``ladder,'' such as the one shown in Figure \ref{fig-ladder}, where the currents on the vertical ``rungs'' form a sequence of consecutive integers, the directions of those arcs alternate, and the rotations on the vertices form a checkerboard pattern. The currents on the horizontal arcs can be deduced from Kirchhoff's current law. 

\begin{figure}[tbp]
\centering
    \begin{subfigure}[b]{0.99\textwidth}
        \centering
        \includegraphics[scale=0.9]{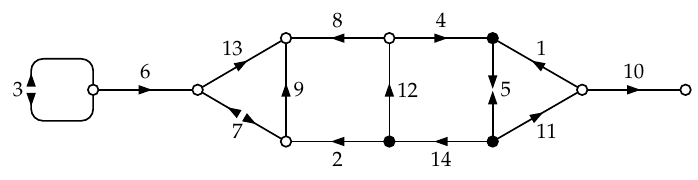}
        \caption{}
    \end{subfigure}
    \begin{subfigure}[b]{0.99\textwidth}
        \centering
        \includegraphics[scale=0.9]{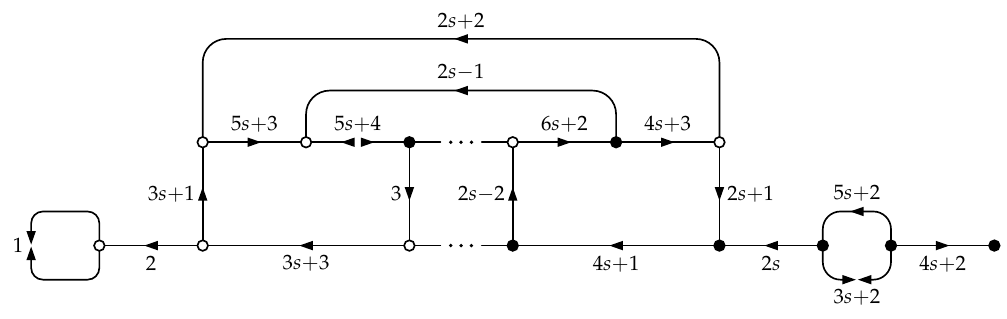}
        \caption{}
    \end{subfigure}
    \begin{subfigure}[b]{0.99\textwidth}
        \centering
        \includegraphics[scale=0.9]{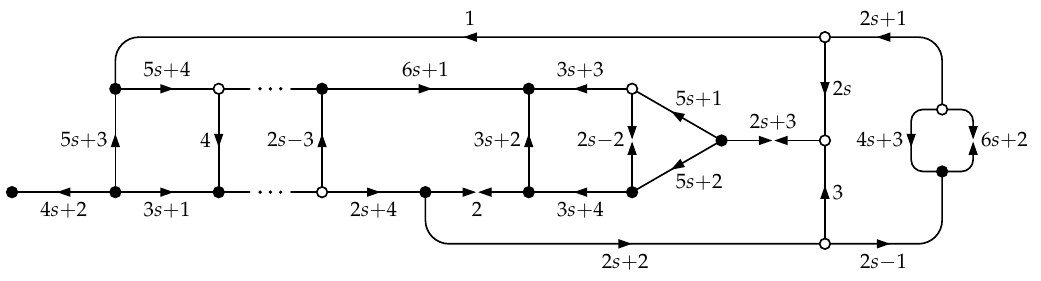}
        \caption{}
    \end{subfigure}
\caption{Current graphs for $O_{12s+6}$ for (a) $s = 2$, (b) odd $s \geq 3$, and (c) even $s \geq 4$.}
\label{fig-case6}
\end{figure}

\begin{figure}[tbp]
\centering
\includegraphics[scale=1]{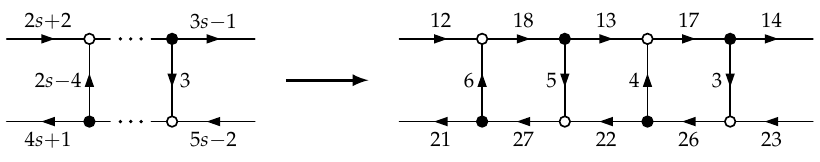}
\caption{A ladder and its specification for $s = 5$.}
\label{fig-ladder}
\end{figure}

For readability, we set $r = 2s+1 = (12s+6)/6$ and describe just one of the handles---the other handles will be obtained via the additivity rule: we add $2i$ to each vertex that is mentioned, for appropriate values of $i \in \Z_{12s+6}$. In each current graph, we orient the circuit so that it is in normal behavior as it traverses the degree 1 vertex. Thus, that vertex induces the faces $[0, 4r, 2r]$ and $[r, 3r, 5r]$ (the latter face has the ``opposite'' orientation since $r$ is odd). We connect these two faces with a handle to add the edges $(0, 3r)$, $(2r, 5r)$, $(4r, 2r)$, $(0, r)$, $(2r, 3r)$, and $(4r, 5r)$, as illustrated on the left of Figure \ref{fig-add6}(a). The latter three edges already exist in the graph, so we remove their original instances. In the resulting quadrangular faces, as seen on the right of Figure \ref{fig-add6}(a), the other diagonals are the missing edges $(q, q+3r)$, $(q+2r, q+5r)$, and $(q+4r, q+r)$, where $q$ is $2r+1$ and $1$ for odd and even $s$, respectively. After adding the handle, the embedding remains triangular.

\begin{figure}[tbp]
\centering
    \begin{subfigure}[b]{0.45\textwidth}
        \centering
        \includegraphics[scale=1]{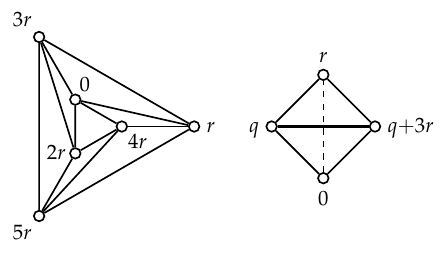}
        \caption{}
    \end{subfigure}
    \begin{subfigure}[b]{0.25\textwidth}
        \centering
        \includegraphics[scale=1]{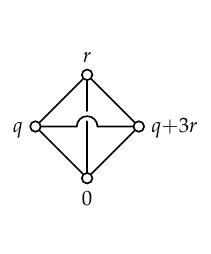}
        \caption{}
    \end{subfigure}
\caption{A handle and an edge flip used to incorporate missing edges.}
\label{fig-add6}
\end{figure}

In each case, the six edges added by this handle are of the form $(j, j+3r)$ for $j = 0, 1, 2r, 2r+1, 4r, 4r+1$. Thus, we can sequentially apply this handle-addition process for each $i = 0, \dotsc, s-1$ (with $i$ defined above) while maintaining that the sets of added edges are disjoint. When we try to apply this procedure one more time by setting $i = s$, this final handle overlaps with the $i = 0$ handle at the three edges $(0, 3r)$, $(r, 4r)$, and $(2r, 5r)$ (e.g., $(i,j) = (0, 0)$ and $(i,j) = (s, 2r+1)$ are the same edge). Thus, we obtain a triangular embedding of $K_{12s+6}$ with three extra edges. Figure \ref{fig-schematic} illustrates the case $s = 2$, where the overlap is illustrated by dashed lines in the rightmost handle. By Proposition \ref{prop-del}, deleting those extra edges results in a genus embedding of $K_{12s+6}$. 

\begin{figure}[tbp]
\centering
\includegraphics[scale=1]{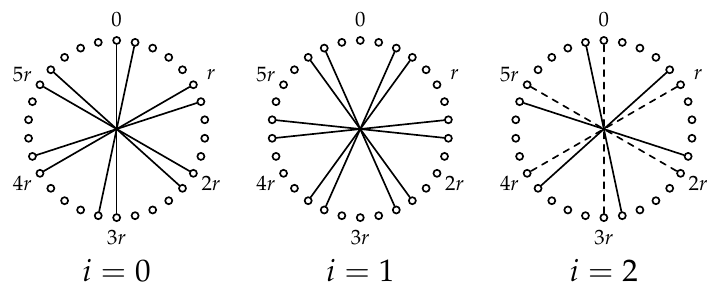}
\caption{Contributions from each handle for $O_{30}$.}
\label{fig-schematic}
\end{figure}
\end{proof}

The sequence of embeddings in the above construction yields genus embeddings for all graphs on $n = 12s+6$ vertices with minimum degree $n-2$:

\begin{corollary}
For $s \geq 0$, there exist orientable triangular embeddings of $K(12s+6, 6s+3-6t)$ for $t = 0, \dotsc, s-1$. Hence, the genus of $K(12s+6, k)$ for $k = 0, \dotsc, 6s+3$, matches the Euler lower bound. 
\label{corr-g6}
\end{corollary}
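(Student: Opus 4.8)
The plan is to read the triangular embeddings off directly from the construction in the proof of Theorem~\ref{thm-case6}, and then invoke Proposition~\ref{prop-del} to handle the remaining matching sizes $k$. For the first assertion: that construction starts from the orientable triangular embedding of $O_{12s+6} = K(12s+6,6s+3)$ and adds the handles indexed by $i = 0,1,\dotsc,s-1$ one at a time, where---as established there---each handle addition preserves both triangularity and orientability and reinserts exactly six of the $6s+3$ deleted matching edges, the six-edge sets being pairwise disjoint. Hence after the first $t$ of these handles have been added, $6t$ of the deleted edges have been restored, so the (still triangular, still orientable) embedding is a genus embedding of $K(12s+6,6s+3-6t)$; letting $t = 0,\dotsc,s-1$ gives the stated family. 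I would also keep on hand the triangular embedding of $K(12s+6,3)$ obtained once all $s$ non-overlapping handles have been added, and the triangular embedding of $K_{12s+6}$ with three doubled edges obtained after the final, overlapping handle; both already appear in the proof of Theorem~\ref{thm-case6}.

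For the ``hence'' part, I would show that $\gamma(K(12s+6,k))$ meets the Euler lower bound for every $k=0,\dotsc,6s+3$ by combining these embeddings with Proposition~\ref{prop-del}. First, for each $m\in\{3,9,15,\dotsc,6s+3\}$ we have a triangular embedding of $K(12s+6,m)$, and when $m<6s+3$ this graph has $12s+6-2m\ge 12$ vertices of full degree $12s+5$; deleting a $j$-edge matching among those vertices, for $0\le j\le 5$, produces $K(12s+6,m+j)$, whose genus equals the Euler lower bound by Proposition~\ref{prop-del}. As $m$ ranges over $3,9,\dotsc,6s+3$, the windows $\{m,m+1,\dotsc,m+5\}$ (truncated at $6s+3$, so that $m=6s+3$ contributes only $k=6s+3$) together cover all of $\{3,4,\dotsc,6s+3\}$. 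For $k\in\{0,1,2\}$, apply Proposition~\ref{prop-del} to the triangular embedding of $K_{12s+6}$ with three doubled edges: deleting those three doubled edges---and then, for $k=1,2$, up to two additional matching edges---shows that $\gamma(K(12s+6,k))$ equals the Euler lower bound. Together this exhausts $k=0,\dotsc,6s+3$.

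The case $s=0$ needs a separate word: there the first assertion is vacuous, $O_6=K(6,3)$ triangulates the sphere, $\gamma(K_6)=1$ matches its Euler lower bound, and $K(6,1),K(6,2)\subseteq K_6$ have more than $3\cdot 6-6$ edges, so by Corollary~\ref{corr-euler} each has genus at least $1$ and hence exactly $1$---again the Euler lower bound. The only real work beyond Theorem~\ref{thm-case6} is bookkeeping: checking that each $K(12s+6,m)$ in hand has enough full-degree vertices for the deletions of Proposition~\ref{prop-del} to be realized as matching extensions, and that the length-$6$ windows based at $3,9,15,\dotsc$, together with $\{0,1,2\}$, tile $\{0,1,\dotsc,6s+3\}$. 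The one point worth stating carefully is that the construction of Theorem~\ref{thm-case6} does produce the triangular embedding of $K(12s+6,3)$ (obtained by stopping just before the overlapping handle), since this is exactly what bridges the gap between $k\le 2$ and $k\ge 9$; there is otherwise no substantive obstacle.
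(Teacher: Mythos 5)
Your proposal is correct and takes essentially the same route as the paper: it extracts the intermediate triangular embeddings of $K(12s+6,6s+3-6t)$ from the handle-addition process of Theorem~\ref{thm-case6}, covers $k\geq 3$ by applying Proposition~\ref{prop-del} to the nearest embedding with $k'\equiv 3\pmod 6$, handles $k=0,1,2$ via the final embedding with duplicated edges (the paper's stated alternative), and treats $s=0$ separately with $K(6,3)$ planar and $\gamma(K_6)=1$. The extra bookkeeping you include (enough full-degree vertices to realize the deletions as matching extensions, and the tiling of $\{0,\dotsc,6s+3\}$ by the length-6 windows) is implicit in the paper but correct.
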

\begin{proof}
Besides the triangular embeddings in the proof of Theorem \ref{thm-case6}, there is also a triangular embedding of $K(6,3)$ in the plane, and the genus of $K_6$ is $1$ \cite{Ringel-MapColor}.

If $k \geq 3$, then define $k'$ to be the largest integer where $k' \leq k$ and $k' \equiv 3 \pmod{6}$. Then, we can apply Proposition \ref{prop-del} to the triangular embedding of $K(12s+6, k')$ to determine the genus of $K(12s+6, k)$. For $k = 1,2$, the Euler lower bound for $K(12s+6,k)$ is the same as that of $K_{12s+6}$. Alternatively, we can apply Proposition \ref{prop-del} again to the final embedding with duplicate edges.
\end{proof}

The edge flips used in the construction imply tight bounds on surface crossing numbers:

\begin{corollary}
For $s \geq 0$, the surface crossing number of $K_{12s+6}$ in the surface of genus $\gamma(K_{12s+6})-h$, for $h = 1, \dotsc, s+1$ is $6h-3$. 
\label{corr-c6}
\end{corollary}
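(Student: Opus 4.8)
The plan is to show the surface crossing number equals $6h-3$ by matching lower and upper bounds, the bridge being a triangular embedding of $K(12s+6,\,6h-3)$ in the target surface.

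First I would pin down that embedding. Halting the construction in the proof of Theorem~\ref{thm-case6} after $m = s+1-h$ handle additions (legitimate since $0 \le m \le s$ for $h \in \{1,\dots,s+1\}$) yields an orientable triangular embedding of $K(12s+6,\,6s+3-6m) = K(12s+6,\,6h-3)$, and by the genus formulas its surface is $S_{\gamma(O_{12s+6})+m} = S_{\gamma(K_{12s+6})-h}$. (For $s=0$ the relevant embedding is the planar octahedron $K(6,3) = O_6$ in $S_0 = S_{\gamma(K_6)-1}$.) The lower bound is then immediate from Corollary~\ref{corr-kainen}: $cr_{\gamma(K_{12s+6})-h}(K_{12s+6}) \ge 6h-3$. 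Alternatively, Proposition~\ref{prop-kainen} with $g = \gamma(K_{12s+6})-h = 12s^2+5s+1-h$ gives the same bound after a one-line computation with $\binom{12s+6}{2}$.

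For the upper bound I would take the same triangulation of $K(12s+6,\,6h-3)$ in $S_{\gamma(K_{12s+6})-h}$ and reinsert the $6h-3$ deleted matching edges, each crossing exactly one edge of the triangulation. This is precisely the ``edge flip'' of Figure~\ref{fig-add6}(b): the ends of a missing edge $(j,\,j+3r)$ are the apexes of the two triangular faces bordering a common edge, so $(j,\,j+3r)$ can be routed through those two triangles, crossing only that common edge. Because every edge of a triangulation borders exactly two faces, distinct missing edges are charged to distinct edges of the triangulation; keeping each reinserted edge inside its own pair of triangles, the one remaining point is that no two reinserted edges cross each other. Granting that, we obtain a drawing of $K_{12s+6}$ in $S_{\gamma(K_{12s+6})-h}$ with exactly $6h-3$ crossings, meeting the lower bound.

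The crux is this final non-interference check. It requires naming, for each of the $6h-3$ missing edges, the pair of triangular faces that realizes its flip, and verifying these pairs are disjoint enough that no reinserted edge meets the triangulation --- or another reinserted edge --- beyond its prescribed single crossing. In practice this is the bookkeeping already carried out in the proof of Theorem~\ref{thm-case6}: tracking the faces incident to the degree-$1$ vertex of the current graphs in Figures~\ref{fig-ex} and \ref{fig-case6} and their translates under the additivity rule, together with a separate treatment of the three ``special'' missing edges that correspond there to the overlap between the first and last handle. Two boundary cases are worth flagging: at $h = s+1$ the triangulation in play is the genus embedding of the octahedron $O_{12s+6}$ and all $6s+3$ of its matching edges get reinserted, while for $s=0$ the whole statement reduces to the classical equality $cr_0(K_6) = 3$.
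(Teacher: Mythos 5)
Your proposal takes essentially the same route as the paper's proof: the lower bound comes from Corollary \ref{corr-kainen} applied to the intermediate triangular embeddings of $K(12s+6,\,6h-3)$, and the upper bound from reinserting each missing edge with a single crossing via the edge flip of Figure \ref{fig-add6}(b), with non-interference justified by the disjointness of the face pairs and the $s=0$ case handled by $cr_0(K_6)=3$. The paper is only marginally more explicit, naming the missing edges $e_i=(q+2i,\,q+3r+2i)$ and the crossed edges $(2i,\,r+2i)$ and noting that the same insertion persists in any later triangulation where $e_i$ is still absent.
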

\begin{proof}
For $s = 0$, the planar crossing number of $K_6$ is $3$ \cite{HararyHill}. For larger $s$, each missing edge $e_i = (q+2i, q+3r+2i)$ can be incorporated into the triangular embedding of $O_{12s+6}$ by crossing the edge $(2i, r+2i)$, like in Figure \ref{fig-add6}(b). These crossings do not interfere with each other since the edges are added inside of disjoint pairs of faces. We may still insert $e_i$ in this manner in any subsequent triangular embedding in the proof of Theorem \ref{thm-case6} where $e_i$ is still missing. Since each missing edge is incorporated using a single crossing, the total number of crossings match the lower bound in Corollary \ref{corr-kainen}. 
\end{proof}

\subsection{$n \equiv 0 \pmod{12}$}

Once again, we utilize the degree 1 vertex with order 3 excess, but unlike in the previous residue, the process is complicated by the fact that missing edges are between vertices of the same parity. As a result, an extra set of edge flips are needed, but we have increased flexibility in deciding which pairs of faces we connect with handles. 

\begin{theorem}
For $s \geq 2$, there is an orientable triangular embedding of the octahedral graph $O_{12s}$ that can be augmented with $s$ handles to obtain a triangular embedding of the complete graph $K_{12s}$. 
\end{theorem}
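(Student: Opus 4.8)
The plan is to adapt the proof of Theorem~\ref{thm-case6}, replacing the current group $\Z_{12s+6}$ by $\Z_{12s}$ and the constant $r = 2s+1$ by $r = 2s = (12s)/6$. First I would produce families of orientable cascades over $\Z_{12s}$ satisfying properties (C1)--(C6): a fixed ``core'' carrying the degree~$1$ vertex of order~$3$ excess, attached to one or two ladders of the type in Figure~\ref{fig-ladder}, together with the customary split into a couple of small base cases and two infinite families indexed by the parity of $s$, in analogy with Figure~\ref{fig-case6}. Property~(C5) then forces each vertex $i$ of the derived graph to be adjacent to everything except $i + 6s$, so the derived embedding is a triangular embedding of $O_{12s}$; property~(C6), followed by flipping the odd vertices, shows it is orientable. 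As in Theorem~\ref{thm-case6}, orienting the lone circuit to be in normal behavior at the degree~$1$ vertex, that vertex contributes the triangular faces $[0, 2r, 4r]$ and $[r, 3r, 5r]$, and by the additivity rule every translate $[2i, 2r+2i, 4r+2i]$ and $[r+2i, 3r+2i, 5r+2i]$ is also a face.

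Next I would describe a single handle-plus-flip operation that incorporates six of the $6s$ missing edges $(i, i+6s) = (i, i+3r)$, the remaining handles being obtained by adding $2i$ to everything, for $i = 0, \dots, s-1$. The one genuinely new feature relative to the $n \equiv 6 \pmod{12}$ case is that here $3r = 6s$ is \emph{even}, so each missing edge joins vertices of the \emph{same} parity; hence a bare handle between $[0, 2r, 4r]$ and $[r, 3r, 5r]$ does not produce quadrilateral faces whose diagonals are missing edges. I would fix this with an extra round of edge flips --- swapping a diagonal inside one or two of the triangular faces adjacent to the two faces being joined, as in the move of Figure~\ref{fig-add6}(b) --- which replaces some present edges by others and reshapes the local face structure; the extra freedom available here is that the handle may be routed between a translate of $[0, 2r, 4r]$ and a \emph{different} translate of $[r, 3r, 5r]$, and this is what makes the parities match. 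Once the handle has been added and its three quadrilateral faces have been triangulated by their missing-edge diagonals, exactly six missing edges $(j + 2i, j + 2i + 3r)$, with $j$ ranging over a fixed six-element set $J$, have been introduced, and the embedding is triangular again.

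Since there are $6s$ missing edges and each handle incorporates six of them, I would then verify that the sets $J + 2i$ are pairwise disjoint modulo $6s$ for $i = 0, \dots, s-1$ and together cover all $6s$ missing edges; concretely, reducing $J$ modulo $6s$ should yield a set like $\{0, 1, 2s, 2s+1, 4s, 4s+1\}$, whose translates by $0, 2, \dots, 2s-2$ tile $\Z_{6s}$. In contrast with the $n \equiv 6 \pmod{12}$ case there is no wrap-around overlap, so no edge is duplicated, and after the $s$th handle the underlying graph is exactly $K_{12s}$, triangularly embedded. Orientability is preserved throughout, since attaching a handle to an orientable surface and swapping a diagonal inside a quadrilateral or triangular face are both orientation-preserving; hence the result is an orientable triangular embedding of $K_{12s}$, which is automatically a genus embedding by Proposition~\ref{prop-bound}.

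I expect the main obstacle to be the explicit construction of the cascades and, above all, the combinatorial bookkeeping behind the edge flips: one must arrange the flips and the handle endpoints so that (i) the six edges picked up by each handle form a translate $J + 2i$ of a fixed six-element set, (ii) these translates tile $\Z_{6s}$ as $i$ runs over $0, \dots, s-1$, and (iii) triangularity is restored after each step without re-creating an edge that is already present. Checking (C5) and (C6) for the ladder families is the routine-but-tedious part --- one reads the log off the current graph and verifies that the consecutive rung currents, together with the Kirchhoff-forced horizontal currents, realize every element of $\Z_{12s} \setminus \{0, 6s\}$ exactly once, with even currents matched to the bidirectional edges --- whereas controlling the parities of the flipped edges is the delicate point, and is presumably why a few small values of $s$ need separate treatment and why the odd and even cascade families differ.
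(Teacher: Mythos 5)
Your outline has the right skeleton---cascades over $\Z_{12s}$ with a degree-$1$ vertex of order-$3$ excess, one handle per six missing edges propagated by the additivity rule, the correct observation that $3r = 6s$ is even so the missing edges join same-parity vertices and a bare handle between $[0,2r,4r]$ and $[r,3r,5r]$ cannot supply them, and the correct tiling count showing $s$ disjoint handles cover all $6s$ missing edges with no wrap-around. But the proposal stops at exactly the two places where the content of the theorem lives. First, you say you ``would produce'' families of orientable cascades satisfying (C1)--(C6); producing them \emph{is} the theorem, and the paper must exhibit two explicit ladder families (odd and even $s \geq 3$) plus a separate construction for $s = 2$, which is in fact an index-$2$ current graph with two circuits---not an orientable cascade at all, so your uniform setup does not even cover the base case. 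Second, and more seriously, the handle mechanism is left as a gesture (``an extra round of edge flips \dots swapping a diagonal inside one or two of the triangular faces'') precisely where the argument needs to be exact.

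The paper's actual mechanism is different from what you sketch: it joins the all-even face $[0,4r,2r]$ to the all-\emph{odd} face $[a, a+2r, a+4r]$ with $a$ odd (a translate of the base face by an odd amount---not a translate of $[r,3r,5r]$, which is an even translate since $r = 2s$), triangulates the handle entirely with six \emph{re-routed existing} edges $(j, a+j)$ and $(j, a+2r+j)$ for $j = 0, 2r, 4r$, and recovers all six missing edges as the diagonals of the six quadrilaterals vacated by those edges; no missing edge is drawn inside the handle itself. Your ``one or two'' flips cannot yield six new missing edges per handle, and whether the vacated quadrilaterals have missing-edge diagonals of the required residues $(b+j, b+3r+j)$, $(c+j, c+3r+j)$ with $b, c$ of opposite parity depends on the local rotations, i.e., on the specific currents chosen---which is why the paper records an explicit triple $(a,b,c)$ for each family and why no generic parity argument can substitute for that verification. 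As written, the proposal is a plausible plan rather than a proof.
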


\begin{proof}
Consider the current graphs in Figures \ref{fig-case0-s2} and \ref{fig-case0} with current group $\Z_{12s}$, defined for all $s \geq 2$. For the $s = 2$ case in Figure \ref{fig-case0-s2}, it is an orientable index 2 current graph. Instead of describing all the properties needed to explain this single case, we just list out its logs:
$$\arraycolsep=4pt\begin{array}{rlccccccccccccccccccccl}
\lbrack0\rbrack. & (8 & 16 & 23 & 22 & 3 & 5 & 2 & 1 & 14 & 18 & 15 & 21 & 6 & 20 & 13 & 17 & 4 & 10 & 11 & 19 & 9 & 7) \\
\lbrack1\rbrack. & (16 & 8 & 13 & 23 & 1 & 17 & 2 & 21 & 19 & 22 & 15 & 10 & 6 & 9 & 3 & 18 & 4 & 11 & 7 & 20 & 14 & 5).
\end{array}$$
To generate the rotation at vertex $i \in \Z_{24}$, take the log of circuit $[i \bmod{2}]$ and add $i$ to each element. One can check that the resulting rotation system is triangular. For more information on index 2 current graphs and their relationship with orientable cascades, see Sun \cite{Sun-Index2}. 

\begin{figure}[tbp]
\centering
\includegraphics[scale=0.9]{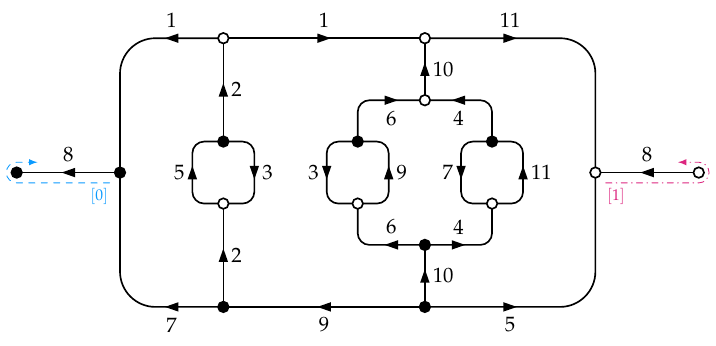}
\caption{An index 2 current graph for $O_{24}$.}
\label{fig-case0-s2}
\end{figure}

\begin{figure}[tbp]
\centering
    \begin{subfigure}[b]{0.99\textwidth}
        \centering
        \includegraphics[scale=0.9]{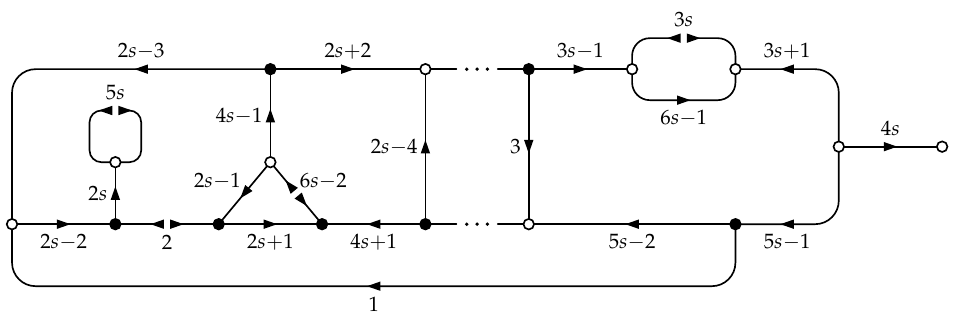}
        \caption{}
    \end{subfigure}
    \begin{subfigure}[b]{0.99\textwidth}
        \centering
        \includegraphics[scale=0.9]{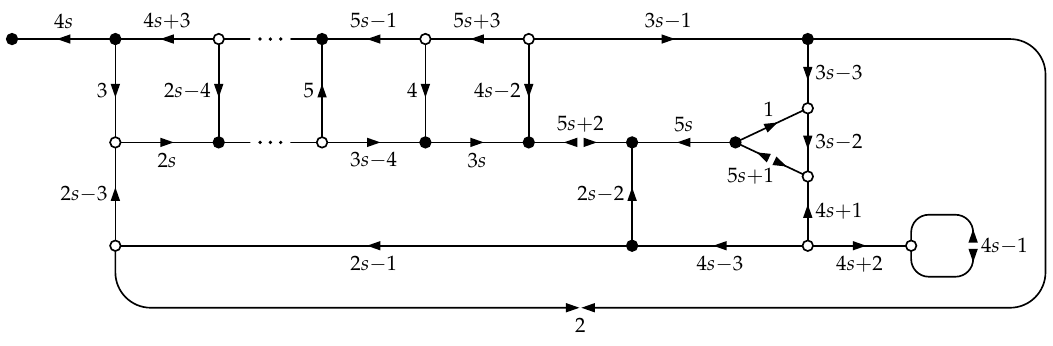}
        \caption{}
    \end{subfigure}
\caption{Current graphs for $O_{12s}$ for (a) odd $s \geq 3$, and (b) even $s \geq 4$.}
\label{fig-case0}
\end{figure}

Like before, we illustrate just one of the handles and let $r = 2s = (12s)/6$. We connect the faces $[0, 4r, 2r]$ and $[a, a+2r, a+4r]$, where $a$ is some odd number. As seen in Figure \ref{fig-add0}, we move two groups of existing edges into this handle: edges of the form $(j,a+j)$ and $(j,a+2r+j)$, where $j = 0, 2r, 4r$. In the resulting quadrilaterals, we can add the missing edges $(b+j, b+3r+j)$ and $(c+j, c+3r+j)$, where $b$ and $c$ are numbers of opposite parity. We obtain the other handles by adding $2i$ to each vertex in this construction, for $i = 1, \dotsc, r-1$. Since $b$ and $c$ are of opposite parities, all of these added edges are distinct. 

\begin{figure}[tbp]
\centering
\includegraphics[scale=1]{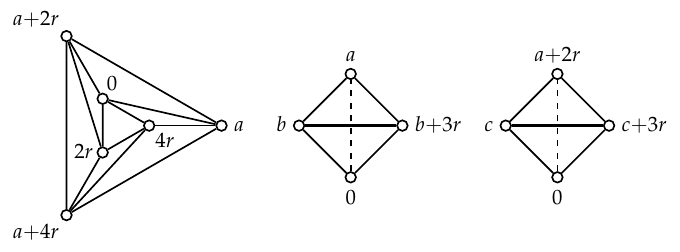}
\caption{A handle and the two kinds of edge flips used to incorporate missing edges.}
\label{fig-add0}
\end{figure}

The values $(a,b,c)$ are $(1, 2, 19)$ for $s = 2$, $(2s-1, 8s+1, 3s-1)$ for odd $s \geq 3$, and $(12s-3, 10s, 8s-1)$ for even $s \geq 4$. 
\end{proof}

Like in the $n \equiv 6 \pmod{12}$ case, the construction has additional benefits:

\begin{corollary}
For $s \geq 1$, there exist orientable triangular embeddings of $K(12s, 6s-6t)$ for $t = 0, \dotsc, s$. Hence, the genus of $K(12s, k)$ for $k = 0, \dotsc, 6s$, matches the Euler lower bound. 
\label{corr-g0}
\end{corollary}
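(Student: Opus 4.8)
The plan is to mirror the argument for Corollary \ref{corr-g6}, feeding the sequence of triangular embeddings produced in the preceding theorem into Proposition \ref{prop-del}. First I would record the base cases: for $s \geq 2$ the triangular embeddings of $K(12s, 6s - 6t)$ for $t = 0, \dotsc, s$ come directly from the theorem, where $t = 0$ gives the triangular embedding of $O_{12s}$ (each handle covers exactly two missing edges, and $r = 2s$ handles cover all $6s$ of them, so after adding any $r - t$ of the handles one obtains a triangular embedding of $K(12s, 6s - 6t)$ — here I should double-check that the $2(b+j,b+3r+j)$ and $(c+j,c+3r+j)$ edges added by distinct handles are genuinely disjoint from one another and from the graph's edges, which the theorem's proof asserts because $b,c$ have opposite parity and the handle faces are pairwise disjoint). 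For $s = 1$ the claim is about $K(12, k)$ for $k = 0, \dotsc, 6$; here I would cite the known triangular embedding of $O_{12} = K(12,6)$ from Jungerman and Ringel \cite{JungermanRingel-Octa} and the value $\gamma(K_{12}) = 6$ from Ringel \cite{Ringel-MapColor}, analogous to how the $s=0$ case of Corollary \ref{corr-g6} is handled by hand.

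Next I would argue the general conclusion. Given $k \in \{0, \dotsc, 6s\}$, if $k \geq 0$ let $k'$ be the largest integer with $k' \leq k$ and $k' \equiv 0 \pmod 6$; then $k - k' \leq 5$, and applying Proposition \ref{prop-del} to the triangular embedding of $K(12s, k')$ — deleting $k - k'$ further matching edges — yields an embedding of $K(12s, k)$ whose genus equals the Euler lower bound. The only subtlety is whether the $k - k'$ edges we wish to delete from $K(12s, k')$ can be chosen so that the result is exactly $K(12s, k)$, i.e.\ so that the deleted edges form a matching disjoint from the already-deleted matching of size $k'$; since $k' < 6s < 12s/2$ there are plenty of vertices not saturated by the removed matching, so such edges exist. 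When $k < 6$ (only relevant when $6s < 6$, i.e.\ nonexistent for $s \geq 1$) or when $k$ is such that the Euler lower bound of $K(12s,k)$ coincides with that of $K_{12s}$, one can alternatively apply Proposition \ref{prop-del} to the final triangular-embedding-with-extra-edges; but in fact for $s \geq 1$ every $k$ in range is covered by the $k'$ construction directly.

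The main obstacle — really the only one, since the heavy lifting is done by the theorem — is bookkeeping: confirming that for each residue class of $s$ (the $s=2$ index-2 case, odd $s \geq 3$, even $s \geq 4$) the $r = 2s$ handles can be added in sequence with all their newly introduced edges distinct, so that intermediate stages are genuine triangular embeddings of $K(12s, 6s - 6t)$ rather than embeddings with accidental duplicate edges. Unlike the $n \equiv 6 \pmod{12}$ case, where the final handle deliberately overlaps the first, here the stated triple $(a,b,c)$ is chosen precisely so that no overlaps occur across all $r$ handles; I would verify this by checking that the map $i \mapsto (b + 2i, c + 2i)$ is injective on $\{0, \dotsc, r-1\}$ modulo $12s$ and that $\{b + 2i\}$ and $\{c + 2i\}$ stay in disjoint parity classes, which is immediate from $b, c$ having opposite parity and $2i$ ranging over $r = 2s < 12s$ even residues. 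Once that is in hand, Proposition \ref{prop-del} finishes the genus computation exactly as in Corollary \ref{corr-g6}.
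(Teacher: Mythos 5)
Your overall strategy is the same as the paper's: the paper gives no separate proof for this corollary, deferring the $s \geq 2$ cases to the intermediate embeddings of the preceding theorem, treating $s=1$ by citing the known triangular embeddings of $O_{12}$ and $K_{12}$, and finishing the non-multiples of $6$ with Proposition \ref{prop-del} exactly as in Corollary \ref{corr-g6}. However, your bookkeeping of the handles contains a concrete error that makes the central claim inconsistent as written. Each handle introduces \emph{six} missing diagonals, namely $(b+j,b+3r+j)$ and $(c+j,c+3r+j)$ for $j = 0, 2r, 4r$, and $O_{12s}$ has only $6s$ missing edges in total; so the correct count is $s$ handles (as the theorem itself states), not $r = 2s$, and it is $t$ handles (not $r-t$) that produce a triangular embedding of $K(12s, 6s-6t)$. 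With $2s$ handles each claimed to add six new edges you would be adding $12s$ edges to fill $6s$ gaps, so the disjointness you assert cannot hold on that index range. Relatedly, your proposed verification --- injectivity of $i \mapsto (b+2i, c+2i)$ in $\Z_{12s}$ --- tests the wrong condition: the missing edges are the diagonals $(v, v+3r)$, identified under $v \sim v+3r$, and each handle covers the two full residue classes $b+2i$ and $c+2i$ modulo $r$; the check that matters is that $2i \bmod r$ takes $s$ distinct values for $i = 0, \dotsc, s-1$ and that no coincidence of the form $b+2i+j \equiv b+2i'+j' \pmod{3r}$ occurs for $i \neq i'$, which is where the restriction $|i-i'| < s = r/2$ is actually used.

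Two smaller points. For $s=1$, the first sentence of the corollary asserts the existence of a triangular embedding of $K(12,0) = K_{12}$, so you should cite the triangular embedding itself (which exists since $12 \equiv 0 \pmod{12}$), not merely the value $\gamma(K_{12}) = 6$; the genus value alone suffices for the second sentence but not the first. Finally, your worry about whether the $k-k'$ additional deleted edges can be chosen to extend the existing matching is well placed but is resolved exactly as you say; once the handle count is corrected, Proposition \ref{prop-del} closes the argument as in Corollary \ref{corr-g6}.
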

\begin{corollary}
For $s \geq 1$, the surface crossing number of $K_{12s}$ in the surface of genus $\gamma(K_{12s})-t$, for $t = 0, \dotsc, s$ is $6t$. 
\label{corr-c0}
\end{corollary}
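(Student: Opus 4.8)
The plan is to prove matching lower and upper bounds, following the template of Corollary~\ref{corr-c6}. For the lower bound, Corollary~\ref{corr-g0} supplies, for each $t=0,\dots,s$, an orientable triangular embedding of $K(12s,6t)$ in the surface of genus $\gamma(K_{12s})-t$: the embedding it records as $K(12s,6s-6u)$ has genus $\gamma(K_{12s})-(s-u)$, so the reindexing $t=s-u$ produces precisely these. Applying Corollary~\ref{corr-kainen} to such an embedding gives $cr_{\gamma(K_{12s})-t}(K_{12s})\ge 6t$; the case $t=0$ is vacuous.

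For the upper bound I would build a drawing of $K_{12s}$ in the surface of genus $\gamma(K_{12s})-t$ with exactly $6t$ crossings. Start from the triangular embedding of $K(12s,6t)$ above, which is the embedding obtained from $O_{12s}$ by attaching $s-t$ of the $s$ handles of the preceding theorem and so has exactly $6t$ missing edges: the $3t$ ``$b$-type'' edges $(b+2i,b+3r+2i)$ and the $3t$ ``$c$-type'' edges $(c+2i,c+3r+2i)$ (with $r=2s$ and $b,c$ of opposite parity) that have not yet been incorporated by handles. Rather than spending a handle on them, reinsert each with a single crossing, using whichever of the two edge flips of Figure~\ref{fig-add0} matches its parity type --- exactly as single crossings were used in Corollary~\ref{corr-c6}; moreover such an insertion is valid in any intermediate triangular embedding in which the edge is still missing. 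By the additivity rule the pairs of faces used by distinct edge flips are disjoint translates of one another, so the $6t$ crossings do not interfere, and the drawing has precisely $6t$ crossings. Combining with the lower bound gives $cr_{\gamma(K_{12s})-t}(K_{12s})=6t$.

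Two small items need attention. First, $t=0$ asserts only that $K_{12s}$ triangulates its genus surface, which is the preceding theorem for $s\ge 2$ and a classical fact for $s=1$; the leftover $s=1$ instance $cr_5(K_{12})=6$ should be settled from a known triangular embedding of $O_{12}$ in $S_5$ plus one edge flip per missing edge. Second --- and this is the step I expect to require the most care --- one must verify that after the $s-t$ handles have been attached, each surviving missing edge still has its edge-flip configuration intact, i.e.\ the relevant pair of triangular faces and the crossed edge are still present, and that these pairs are pairwise disjoint. This is the same bookkeeping already needed for the handle additions themselves: the faces in question are translates under the group action and the construction keeps them disjoint, so tracking which faces survive each handle suffices and no new idea is needed.
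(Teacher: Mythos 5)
Your proposal is correct and follows essentially the same route as the paper: the lower bound comes from Corollary~\ref{corr-kainen} applied to the intermediate triangular embeddings of $K(12s,6t)$, the upper bound from inserting each still-missing edge with one crossing via the edge flips of Figure~\ref{fig-add0}, and the $s=1$ case is settled separately from the $O_{12}$ cascade. The paper's written proof is just far terser --- it only spells out $s=1$ and leaves the $s\ge 2$ argument implicit by analogy with Corollary~\ref{corr-c6} --- so your elaboration (including the disjointness bookkeeping you flag) matches what the authors intend.
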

\begin{proof}
The only remaining case for both results is when $s = 1$. There exists a triangular embedding of $K_{12}$ \cite{Ringel-MapColor}. In Jungerman and Ringel \cite{JungermanRingel-Octa}, the orientable cascade generating a triangular embedding of $O_{12}$ has a log of the form
$$\begin{array}{rcccccccccccccccccccccl}
(\dotsc & 2 & 11 & 8 & \dotsc),
\end{array}$$
so the missing edge $(2,8)$ can be added to graph, crossing the edge $(0,11)$. Similarly, by the additivity rule, each of the other five missing edges can also be added to the drawing using one crossing (though for the edges between odd vertices, the orientation is reversed). Thus the surface crossing number of $K_{12}$ in the surface $S_5$ is $6$. 
\end{proof}

\section{Conclusion}

We described a construction for transforming genus embeddings of octahedral graphs into those of complete graphs, when the number of vertices is a multiple of 6. The intermediate embeddings generated in the construction provides some evidence in favor of an affirmative answer to Mohar and Thomassen's Problem \ref{prob-mohar}. Additional, they determine some surface crossing numbers of the complete graphs. 

Jungerman and Ringel \cite{JungermanRingel-Minimal} found a \emph{minimal triangulation} for each surface $S_g$, $g \neq 2$. In particular, for each such surface, they constructed a triangular embedding of a simple graph on $$M(g) := \left\lceil \frac{7+\sqrt{1+48g}}{2} \right\rceil$$ vertices, and this is the fewest number of vertices possible. Previously, the author \cite{Sun-Kainen} conjectured a generalization of this result, that Kainen's lower bound is tight for the complete graph on $M(g)$ vertices in the surface $S_g$, i.e., that there is a minimal triangulation of $S_g$ where each missing edge can be added to the embedding using only one crossing each. Theorem \ref{thm-crossing} solves this conjecture for some surfaces, but minimal triangulations can have as many as $n-6$ fewer edges than the complete graph $K_n$. 

Our constructions exploit the vertex of degree 1, but the current graphs used to find the genus of the other octahedral graphs \cite{JungermanRingel-Octa, Sun-Index2} do not have this feature. Are there other simple structures that allow for a similar kind of handle operation? Furthermore, what about the graphs $K(n,t)$ for $n$ odd? At present, the genus of such graphs, especially for large $t$, is almost completely unknown. Even for small $t$, there are few results besides those derived from Proposition \ref{prop-del}: Su, Noguchi, and Zhou \cite{Su} showed that $\gamma(K(9,4)) = \gamma(K_9) = 3$, extending a result of Huneke \cite{Huneke-Minimum}. Perhaps the odd $n$ case is the most promising and fertile direction to explore next. 

\bibliographystyle{alpha}
\bibliography{biblio}

\begin{thebibliography}{Sun24b}

\bibitem[GT87]{GrossTucker}
Jonathan~L. Gross and Thomas~W. Tucker.
\newblock {\em {Topological Graph Theory}}.
\newblock John Wiley \& Sons, 1987.

\bibitem[HH63]{HararyHill}
Frank Harary and Anthony Hill.
\newblock On the number of crossings in a complete graph.
\newblock {\em Proceedings of the Edinburgh Mathematical Society},
  13(4):333--338, 1963.

\bibitem[Hun78]{Huneke-Minimum}
John~Philip Huneke.
\newblock A minimum-vertex triangulation.
\newblock {\em Journal of Combinatorial Theory, Series B}, 24(3):258--266,
  1978.

\bibitem[JR78]{JungermanRingel-Octa}
Mark Jungerman and Gerhard Ringel.
\newblock The genus of the $n$-octahedron: {Regular cases}.
\newblock {\em Journal of Graph Theory}, 2(1):69--75, 1978.

\bibitem[JR80]{JungermanRingel-Minimal}
Mark Jungerman and Gerhard Ringel.
\newblock Minimal triangulations on orientable surfaces.
\newblock {\em Acta Mathematica}, 145(1):121--154, 1980.

\bibitem[Jun74]{Jungerman-K18}
Mark Jungerman.
\newblock {Orientable triangular embeddings of $K_{18}-K_3$ and $K_{13}-K_3$}.
\newblock {\em Journal of Combinatorial Theory, Series B}, 16(3):293--294,
  1974.

\bibitem[Kai72]{Kainen-LowerBound}
Paul~C. Kainen.
\newblock A lower bound for crossing numbers of graphs with applications to
  {$K_n$, $K_{p,q}$, and $Q(d)$}.
\newblock {\em Journal of Combinatorial Theory, Series B}, 12(3):287--298,
  1972.

\bibitem[May69]{Mayer-Orientables}
Jean Mayer.
\newblock Le probleme des r{\'e}gions voisines sur les surfaces closes
  orientables.
\newblock {\em Journal of Combinatorial Theory}, 6(2):177--195, 1969.

\bibitem[MT01]{MoharThomassen}
Bojan Mohar and Carsten Thomassen.
\newblock {\em Graphs on {S}urfaces}, volume~10.
\newblock Johns Hopkins University Press, 2001.

\bibitem[Rin74]{Ringel-MapColor}
Gerhard Ringel.
\newblock {\em {Map Color Theorem}}, volume 209.
\newblock Springer Science \& Business Media, 1974.

\bibitem[SNZ15]{Su}
Huadong Su, Kenta Noguchi, and Yiqiang Zhou.
\newblock Finite commutative rings with higher genus unit graphs.
\newblock {\em Journal of Algebra and Its Applications}, 14(01):1550002, 2015.

\bibitem[Sun19]{Sun-K12s}
Timothy Sun.
\newblock A simple construction for orientable triangular embeddings of the
  complete graphs on $12s$ vertices.
\newblock {\em Discrete Mathematics}, 342(4):1147--1151, 2019.

\bibitem[Sun20]{Sun-Minimum}
Timothy Sun.
\newblock Simultaneous current graph constructions for minimum triangulations
  and complete graph embeddings.
\newblock {\em Ars Mathematica Contemporanea}, 18(2):309--337, 2020.

\bibitem[Sun23]{Sun-nPrism}
Timothy Sun.
\newblock Settling the genus of the $n$-prism.
\newblock {\em European Journal of Combinatorics}, 110:103667, 2023.

\bibitem[Sun24a]{Sun-Index2}
Timothy Sun.
\newblock Jungerman ladders and index 2 constructions for genus embeddings of
  dense regular graphs.
\newblock {\em European Journal of Combinatorics}, 120:103974, 2024.

\bibitem[Sun24b]{Sun-Kainen}
Timothy Sun.
\newblock On {K}ainen's conjectures on surface crossing numbers.
\newblock {\em arXiv preprint arXiv:2405.06118}, 2024.

\end{thebibliography}

\end{document}